\theoremstyle{plain}
\newtheorem{teo}{Theorem}
\newtheorem{prop}[teo]{Proposition}
\newtheorem{lem}[teo]{Lemma}
\theoremstyle{definition}
\newtheorem{defi}[teo]{Definition}
\theoremstyle{remark}
\newcommand{\z}  {\mathbf{Z}}
\newcommand{\ci} {\mathbf{C}}
\newcommand{\qu} {\mathbf{Q}}
\newcommand{\letr}[1] {\mathcal{1}}
\newcommand{\gq} {G_{\mathbf{Q}}}
\newcommand{\f } {\mathbf{F}}
\DeclareMathOperator{\gl}{GL}
\DeclareMathOperator{\enn}{End}
\DeclareMathOperator{\autt}{Aut}
\DeclareMathOperator{\fr}{Frob}
\def\p{\mathfrak{p}}
\def\a{\mathfrak{a}}
\begin{document}
\title[Tate modules and torsion fields]
{Integral Tate modules and splitting of primes in torsion fields of elliptic curves}

\author{{Tommaso Giorgio} {Centeleghe}}

\address{IWR, Universit\"at Heidelberg, Im Neuenheimer Feld 368, 69120 Heidelberg, Germany}
\email{tommaso.centeleghe@iwr.uni-heidelberg.de}
\urladdr{http://www.iwr.uni-heidelberg.de/groups/arith-geom/centeleghe/}

\maketitle

\begin{abstract}
Let $E$ be an elliptic curve over a finite field $k$, and $\ell$ a prime number
different from the characteristic of $k$. In this paper we consider the problem of finding the
structure of the Tate module $T_\ell(E)$ as an integral Galois representations of $k$. We indicate an explicit
procedure to solve this problem starting from the characteristic polynomial $f_E(x)$ and the $j$-invariant $j_E$ of $E$.
Hilbert Class Polynomials of imaginary quadratic orders play here an important role. We give a global application
to the study of prime-splitting in torsion fields of elliptic curves over number fields.
\end{abstract}

\section{Introduction}	

Let $k$ be a finite field of characteristic $p$ and cardinality $q$, $\bar k$ a fixed algebraic closure of $k$, and $G_k$ the corresponding
absolute Galois group. Let $E$ be an elliptic curve over $k$, $\pi_E:E\to E$ the Frobenius isogeny relative to $k$, and $a_E$ the
``error term'' $q+1-|E(k)|$. For a prime $\ell\neq p$, it is well known that the isogeny invariant of $E$ given by the {\it rational} $\ell$-adic
Tate module $V_\ell(E)=T_\ell(E)\otimes_{\z_\ell}\qu_\ell$ is a semi-simple Galois representation of $k$. Thus its isomorphism class is determined
by the characteristic polynomial
$$f_E(x)=x^2-a_Ex+q$$
of the arithmetic Frobenius $\fr_k\in G_k$, which also uniquely identifies the $k$-isogeny class of $E$ (see \cite{T}, Th\'eor\`eme 1). On the other hand,
the {\it integral} representation given by $T_\ell(E)$ is not an isogeny invariant, and finding its Galois structure
requires, in general, a refinement of the information carried by $f_E(x)$ alone.

In \cite{DT}, Duke and T\'oth study the problem of finding $T_\ell(E)$ from a slightly different perspective. Their main result roughly
says that in order to determine $T_\ell(E)$ in the crucial case where $\enn_k(E)\otimes\qu$ is an imaginary quadratic field, it suffices to know $f_E(x)$ and the order
$\enn_{k}(E)$.
More precisely, out of $f_E(x)$ and the {\it index}
$$b_E=[\enn_k(E):\z[\pi_E]]\in\z_{>0}\cup\{\infty\},$$
they construct an explicit two-by-two matrix with integral entries
that gives the action of ${\rm Frob}_k$ on $T_\ell(E)$ in a suitable $\z_\ell$-basis, for any $\ell\neq p$.

In this paper we go one step further and explain how, in almost all cases, the index $b_E$ can be recovered from the $j$-invariant $j_E$ of $E$ and from $f_E(x)$ using
a procedure involving a family of polynomials $\{\mathcal{P}_D(x)\}_{D\in\z_{\leq 0}}$ associated to singular moduli of elliptic curves (see \eqref{DefPD}).
Furthermore, we offer an alternative proof of the result of Duke and T\'oth which avoids Deuring's Lifting Lemma. Instead, it relies on the observation that, when $b_E$ is finite,
$T_\ell(E)$ is free of rank one over $\enn_k(E)\otimes\z_\ell$ (see \S~\ref{section:index}). This freeness is a special instance of a more general fact on abelian varieties with complex multiplication
by a Gorenstein ring (see \cite{ST}, Remark in \S~4). For this reason, the variant we give is suitable for generalizations of \cite{DT} to the higher dimensional setting
of abelian varieties of $\gl_2$-type.

Lastly, we remark that the method we use to find $b_E$ from $f_E(x)$ and $j_E$ has been known for long time when $E$ is ordinary as a consequence of Deuring's Lifting
Lemma. However, the observation that it remains valid in the supersingular case depends on the nature of the ring $\enn_k(E)$ when $E$ does not have all of its endomorphisms
defined over $k$, and may contain some novelty (see \S~\ref{section:index}).

Before stating the main result of this paper we need some definitions and another piece of notation.

\begin{defi} We say that the elliptic curve $E$ over $k$ is {\it special} if $p \equiv 3\pmod 4$, the degree $[k:\f_p]$ is odd, $a_E=0$, and
$j_E=1728\in k$.
\end{defi}
Let $D\in\z$ be a negative discriminant, i.e., $D<0$ and $D\equiv 0\text{ or }1\pmod 4$, and denote by $\mathcal{O}_D$ the imaginary quadratic order of discriminant $D$, viewed
inside the field $\ci$ of complex numbers.
Let
\begin{equation}\label{DefPD}
\mathcal{P}_D(x)=\prod_{\mathcal{O}_D\subset\enn(\ci/\a)}(x- j_{\ci/\a})
\end{equation}
be the separable, monic polynomial in $\ci[x]$ whose roots are all the $j$-invariants of complex elliptic curves $\ci/\a$ whose endomorphism
rings contain the order $\mathcal{O}_D$. For any given $D$ these values of $j$ are algebraic integers which are permuted by the absolute Galois
group of $\qu$, and so $\mathcal{P}_D(x)\in\z[x]$ (see \cite{Co}, \S~11).
Moreover two such $j$-invariants $j_{\ci/\a}$ and $j_{\ci/\a'}$ lie in the same $\gq$-orbit if and only if $\enn_\ci(\ci/\a)=\enn_\ci(\ci/\a')$.
The irreducible factors of $\mathcal{P}_D(x)$ are usually called Hilbert Class Polynomials of the corresponding imaginary quadratic orders. 

Extend the definition of
$\mathcal{P}_D(x)$ to all $D\leq 0$ by setting $\mathcal{P}_0(x)=0$ and $\mathcal{P}_{D}(x)=1$ for all negative $D$ with $D\equiv 2$ or $3\pmod 4$.
Denote by $\Delta_E$ the discriminant $a_E^2-4q$ of $f_E(x)$, and by $\bar g(x)\in\f_p[x]$ the reduction modulo $p$ of any given polynomial
$g(x)\in\z[x]$. Our main result says:

\begin{teo}\label{MT} Let $E$ be an elliptic curve over $k$. Define
$$\beta_E=\sup_{h>0}\{h:h^2|\Delta_E\hphantom{x}{\rm and}\hphantom{x}\bar{\mathcal{P}}_{\Delta_E/h^2}(j_E)=0\},$$
and
$$\sigma_E=\left\{\begin{array}{cl}
\left(\begin{matrix}
\dfrac{a_E\beta_E-\Delta_E}{2\beta_E}&\dfrac{\Delta_E(\beta_E^2-\Delta_E)}{4\beta_E^3}\\
\beta_E&\dfrac{a_E\beta_E+\Delta_E}{2\beta_E}\\
\end{matrix}\right)&\text{ if }\Delta_E<0,\\
&\\
\left(\begin{matrix}
a_E/2&0\\
0&a_E/2\\
\end{matrix}\right)&\text{ if }\Delta_E=0.\\
\end{array}
\right.$$
The matrix $\sigma_E$ has integral entries and, for any prime $\ell\neq p$, describes the action of $\fr_k$ on $T_\ell(E)$, with respect to a suitable $\z_\ell$-basis,
provided that $\ell$ be odd if $E$ is special. Furthermore, $b_E=\beta_E$ if $E$ is not special, and $b_E=\beta_E/2$ or $\beta_E$ otherwise.
\end{teo}

If $E$ is special and $\ell=2$ then $b_E$ is either $p^m$ or $2p^m$, where $[k:\f_p]=2m+1$. For completeness, the two corresponding possibilities for the action of
$\fr_k$ on $T_2(E)$ are given in \S~\ref{proof}. Together with Panagiotis Tsaknias we implemented a Magma package
which computes $\sigma_E$ from $E$ (see \cite{CT}).

An immediate consequence of the theorem is that if $N$ is a positive integer not divisible by $p$ then $\sigma_E\text{ (mod $N$)}$
describes the action of $\fr_k$ on the $N$-th torsion subgroup $E[N](\bar k)$ of $E$. Compared to \cite{DT}, Theorem \ref{MT} has 
the extra feature of indicating a way to construct $\sigma_E$ from the basic invariants $f_E(x)$ and $j_E$.

From Theorem \ref{MT} we deduce the following global application. Let $K$ be a number field, and $\mathcal{E}$ an elliptic curve over $K$ with
$j$-invariant $j_\mathcal{E}$. If $\p$ is a finite prime of $K$ with residue field $k_\p$ at which $\mathcal{E}$ has good reduction $\mathcal{E}_\p$,
denote by $a_\p$ the error term $|k_\p|+1-|\mathcal{E}_\p(k_\p)|$, and by $\Delta_\p$ the discriminant $a_\p^2-4|k_\p|$ of the characteristic polynomial $f_{\mathcal{E}_\p}(x)$
of $\mathcal{E}_\p$.
If $N$ is a positive integer, let $K(\mathcal{E}[N])/K$ be the extension of $K$ obtained by ``joining the coordinates'' of the $N$-torsion points of
$\mathcal{E}$ (see \S~\ref{global} for more details).

\begin{teo}\label{RL} Let $N$ be a positive integer, and $\p$ a finite prime of $K$ not dividing $N$ and at which
$\mathcal{E}$ has good reduction. If $N=2$, assume furthermore that $\mathcal{E}_\p$ is not special. The prime $\p$ splits completely
in $K(\mathcal{E}[N])/K$ if and only if the following conditions hold:

\begin{enumerate}
\item\label{th3.1} $N^2|\Delta_\p$\text{ and }\;$\mathcal{P}_{\Delta_\p/N^2}(j_\mathcal{E})\equiv 0\pmod\p$;
\item\label{th3.2} $a_\p\equiv 2+\frac{\Delta_\p}{N}\pmod {N^*}$;
\end{enumerate}
where $N^*=N$ if $N$ is odd and $N^*=2N$ if $N$ is even.
\end{teo}

Theorem~\ref{RL} gives an idea of how a reciprocity law in a non-abelian context may appear. It emphasizes the role that the family of polynomials
$\{\mathcal{P_D}(x)\}_{D\leq 0}$ play in the study of prime-splitting in torsion field of elliptic curves over number fields.
More generally, Theorem~\ref{MT} can be used to describe the conjugacy class of $\rho_{\mathcal{E}[N]}(\fr_\p)$ in $\autt(\mathcal{E}[N])\simeq\gl_2(\z/N\z)$,
where $\fr_\p$ is a Frobenius element at $\p$.

The paper is structured as follows: in \S~\ref{section:index} we explain how to compute $b_E$ from $f_E(x)$ and $j_E$, in \S~\ref{proof} we give a proof of the main result, and
\S~\ref{global} contains the details of the global application given by Theorem~\ref{RL}. We will retain throughout the notation established so far.
\section{The Index $b_E$}\label{section:index}
In this section we describe a method for finding the index $b_E$ from $f_E(x)$ and $j_E$ using the
family of polynomials $\mathcal{P}_D(x)$. The proof differs in the ordinary and supersingular cases, even though the statement
is uniform (see Proposition~\ref{index}).

We clarify our terminology on complex multiplication of elliptic curves, which follows \cite{El}. If $\bar\kappa$ is an algebraic closure of a field $\kappa$ then
we say that an elliptic curve $A$ over $\kappa$ \emph{has complex multiplication by an imaginary quadratic order $\mathcal{O}$} (abbreviated CM by $\mathcal{O}$)
if there exists an injection
$$\iota:\mathcal{O}\lhook\joinrel\longrightarrow\enn_{\bar\kappa}(A\times_\kappa\bar\kappa)$$
which is maximal, in the sense that it cannot be extended to a strictly larger imaginary quadratic order $\mathcal{O}'\supsetneq\mathcal{O}$.

We observe that an elliptic curve $E$ over a finite field $k$ has CM by $\enn_{k}(E)$ when this ring is an imaginary quadratic order.
The reason is that if $k'/k$ is any field extension, then the natural inclusion
\begin{equation}\label{baseext}
\enn_k(E)\lhook\joinrel\longrightarrow\enn_{k'}(E\times_k k')
\end{equation}
has torsion free cokernel (see \cite{ST}, \S~4). Furthermore, if $E$ is ordinary then $\enn_k(E)$ is an imaginary quadratic order and the map \eqref{baseext} is an
isomorphism for all $k'/k$. Hence any ordinary $E$ has CM \emph{only} by $\enn_k(E)$.

\begin{prop}\label{disc} Let $D$ be a negative discriminant. The $j$-invariant $j_E$ of $E$ is a root of the reduction of $\mathcal{P}_{D}(x)$ modulo $p$
if and only if $E$ has CM by an imaginary quadratic order $\mathcal{O}$ containing $\mathcal{O}_D$.
\end{prop}

The proposition is well-known to experts, therefore we omit its proof. We shall only say that the ``only if'' part follows from the properties of good reduction of CM
elliptic curves in characteristic zero (see \cite{ST}, \S~5), and the ``if'' part is a consequence of Deuring's Lifting Lemma (see \cite{L}, \S~15.5 Theorem~14).

Consider now the quantity
$$\beta_E=\sup_{h>0}\{h:h^2|\Delta_E\hphantom{x}{\rm and}\hphantom{x}\bar{\mathcal{P}}_{\Delta_E/h^2}(j_E)=0\}$$
introduced in Theorem~\ref{MT}. The main proposition of the section is:

\begin{prop}\label{index} If $E$ is not special then $b_E=\beta_E$.
If $E$ is special, then $\beta_E=2p^m$ and $b_E=\beta_E$ or $\beta_E/2$, where $[k:\f_p]=2m+1$.
\end{prop}

\begin{proof} We shall distinguish three cases
\begin{enumerate}
\item\label{or} $E$ is ordinary;
\item\label{unstable} $E$ is supersingular and $\enn_k(E)\otimes\qu$ is an imaginary quadratic field;
\item\label{stable} $E$ is supersingular and $\enn_k(E)\otimes\qu$ is the definite quaternion algebra over $\qu$
of discriminant $p$.
\end{enumerate}
Case \eqref{unstable} is the only case where $E$ acquires new endomorphisms over a suitable (typically quadratic) finite extension $k'$ of $k$,
and we will refer to it as the {\it unstable supersingular} case. Case \eqref{stable} occurs if and only if $b_E$ is infinite or, equivalently, $\Delta_E=0$; in
this situation the proposition is trivial and we will continue assuming $b_E$ finite.

The Frobenius isogeny $\pi_E$ generates a subring of $\enn_k(E)$ of discriminant $\Delta_E$ and gives rise to an inclusion
$$\iota:\mathcal{O}_{\Delta_E}\lhook\joinrel\longrightarrow\enn_{k}(E).$$
Clearly $b_E$ is the largest $h>0$ such that two conditions hold:
\begin{itemize}
\item there exists a quadratic order $\mathcal{O}\supset\mathcal{O}_{\Delta_E}$ with $[\mathcal{O}:\mathcal{O}_{\Delta_E}]=h$;
\item $\iota$ extends to an inclusion $\iota':\mathcal{O}\hookrightarrow\enn_{k}(E)$.
\end{itemize}
Given this, case \eqref{or} follows immediately from Proposition \ref{disc}, since an ordinary
elliptic curve $E$ has complex multiplication only by $\enn_k(E)$.

We are therefore left with the unstable supersingular case. We proceed with a case-by-case computation of both $b_E$ and $\beta_E$.
The polynomials $f_E(x)$ arising in case \eqref{unstable} are characterized by the conditions $\Delta_E<0$ {\it and} $p$ does not split completely in $\qu(\sqrt{\Delta_E})$ (see \cite{W}, \S~4.1
or \cite{T}, Th\'eor\`eme~1).
All possibilities are listed in the following table, where $r=[k:\f_p]$.
\begin{table}[ht]\label{tabella}
\caption{Unstable supersingular Weil polynomials over $k$.}
{\begin{tabular}{@{}ccccc@{}}
$f_E(x)$&$p$&$r$&$\Delta_E$&$b_E$\\ \hline
$x^2+p^{2m+1}$&-&$2m+1$&$-4p^{2m+1}$&$p^m\,{\rm or}\,2p^m$\\
$x^2+p^{2m}$&$\not\equiv 1\,{\rm mod}\,4$&$2m$&$-4p^{2m}$&$p^m$\\
$x^2\pm p^mx+p^{2m}$&$\not\equiv 1\,{\rm mod}\,3$&$2m$&$-3p^{2m}$&$p^m$\\
$x^2\pm p^{m+1}x+p^{2m+1}$&$2\,{\rm or}\, 3$&$2m+1$&$-(4-p)p^{2m+1}$&$p^m$\\
\end{tabular}}
\end{table}

The values of the index $b_E$ appearing in the last column of the table are readily computed from the corresponding $\Delta_E$,
taking into account that $\enn_k(E)$ is maximal locally at $p$ (see \cite{W}, Theorem~4.2). The point is that, except for the case where
$p\equiv 3\pmod 4$, $r=2m+1$ is odd, and $f_E(x)=x^2+p^{2m+1}$, the order $\z[\pi_E]$ is maximal at every prime $\ell\neq p$,
and thus $\enn_k(E)$ is the maximal order, from which the equality $b_E=p^m$ follows.

On the other hand, if $p\equiv 3\pmod 4$ and
$f_E(x)=x^2+p^{2m+1}$, then $b_E$ is either $2p^m$ or $p^m$, according to whether $\enn_k(E)$ is maximal or sits in the maximal order
with index $2$, respectively. Since both cases do arise for suitable $E$ (\cite{W}, Theorem~4.2), $b_E$ is not constant on the $k$-isogeny class defined by $f_E(x)$
and cannot be determined from $f_E(x)$ alone.

We now turn to the study of $\beta_E$. Since $E$ has Complex Multiplication by $\enn_k(E)$, Proposition~\ref{disc} implies that
\[
\beta_E\geq b_E.
\]
Moreover, if $\enn_k(E)$ is the maximal order, then $\beta_E$ cannot be larger than $b_E$ (for $\Delta_E/h^2$ is not a discriminant if $h>b_E$),
and $\beta_E=b_E$ follows. We are therefore left with showing that $\beta_E=b_E$ when $\enn_k(E)$ is not maximal and $E$ is not special. Notice that in this case $\enn_k(E)$ has discriminant
$-4p$, and $p\equiv 3\pmod 4$. By Proposition~\ref{disc}, it is enough to show that: 

\begin{lem}\label{lemmone} Let $p\equiv 3\pmod 4$, assume that $r=2m+1$ is odd, and let $E$ be an elliptic curve over
$k$ with $f_E(x)=x^2+p^{2m+1}$. If $j_E\neq 1728$, then $E$ has CM by $\mathcal{O}_{-p}$ or $\mathcal{O}_{-4p}$ but not by
both these rings. In the first case $b_E=2p^m$, in the second one $b_E=p^m$.
\end{lem}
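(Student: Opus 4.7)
The plan is to proceed in three stages, with the first two immediate and the third carrying the real content.

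First I would observe that $f_E(x)=x^2+p^{2m+1}$ gives $\pi_E = p^m\sqrt{-p}$, so $\z[\pi_E]$ has conductor $2p^m$ in the maximal order $O_{-p}$ of $\qu(\sqrt{-p})$. Because $R_E \supset \z[\pi_E]$ is maximal at $p$ (by the results recalled in \S\ref{pre}), its conductor in $O_{-p}$ divides $2$, so $R_E \in \{O_{-p}, O_{-4p}\}$, with corresponding values $b_E = 2p^m$ and $b_E = p^m$.

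Next, using the final observation of \S\ref{pre} that $E$ has CM by $R_E$ in the sense of Definition~\ref{CM}, Proposition~\ref{reduc2} places $j_E$ among the roots of $P_{-p}(x) \bmod p$ or of $P_{-4p}(x) \bmod p$, establishing the ``CM by $O_{-p}$ or $O_{-4p}$'' half of the dichotomy.

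The third and hardest step is to rule out simultaneous CM by both orders when $j_E \neq 1728$. Suppose otherwise: the maximal order $\mathcal{O} := \enn_{\bar k}(E \otimes_k \bar k)$ of $\qu_{p,\infty}$ contains both $\omega$ with $\omega^2 - \omega + (p+1)/4 = 0$ (the embedding of $O_{-p}$) and $\alpha$ with $\alpha^2 = -p$, $(1+\alpha)/2 \notin \mathcal{O}$ (the optimal embedding of $O_{-4p}$). Setting $\beta := 2\omega - 1$ and $t := \alpha\beta + \beta\alpha \in \z$, a Gram matrix computation for the reduced-trace form on the rank-$4$ sub-lattice $\z\{1, \alpha, \beta, \alpha\beta\} \subset \mathcal{O}$ gives discriminant $(4p^2 - t^2)^2$; comparison with $\mathrm{disc}(\mathcal{O}) = p^2$ forces $p \mid t$. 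Positivity of the norms $\nr(\alpha \pm \beta) = 2p \mp t$ yields $|t| < 2p$, so $t \in \{-p, 0, p\}$. Since both $\alpha, \beta$ have reduced norm $p$, they lie in the unique two-sided maximal ideal $\mathfrak{P} \subset \mathcal{O}$ above $p$, so $\alpha\beta \in \mathfrak{P}^2 = p\mathcal{O}$, and $\eta := \alpha\beta/p \in \mathcal{O}$ satisfies $\eta^2 - (t/p)\eta + 1 = 0$. For $t = 0$, $\eta^2 = -1$ gives a maximal embedding of $O_{-4}$ into $\mathcal{O}$, and Proposition~\ref{reduc2} with $D=-4$ forces $j_E \equiv 1728 \bmod p$, contradicting the hypothesis.

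The main obstacle lies in the residual subcase $t = \pm p$: there $\eta$ is a primitive sixth root of unity, so $\mathcal{O} \supset O_{-3}$ and nominally $j_E = 0$. Closing this requires finer local control, for example a count of optimal embeddings of $O_{-4p}$ at the prime $2$ in the distinguished maximal order of $\qu_{p,\infty}$ containing $O_{-3}$, to show that no such optimal embedding is available for the $j=0$ supersingular curve.
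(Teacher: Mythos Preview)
Your first two stages are correct and match the paper's setup. The third stage, however, takes a genuinely different route from the paper and, as you yourself flag, is incomplete.

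The paper does not compute inside the quaternion lattice at all. Instead it argues via $k$-forms of $E$: given any maximal embedding $\iota:O\hookrightarrow\enn_{\bar k}(E\otimes_k\bar k)$ with $O\in\{O_{-p},O_{-4p}\}$, one writes $\iota(\sqrt{-p}^{\,2m+1})=u\pi_E$ for a unit $u$ and twists by the cocycle $\sigma\mapsto u$ to obtain a $k$-form $E_\theta$ with $R_{E_\theta}\simeq O$. Then one shows that for $j_E\neq 1728$ the group $H^1(G_k,\autt_{\bar k}(E\otimes_k\bar k))$ has order two and its nontrivial class is represented by the cocycle $\sigma\mapsto -1$; since the centralizer of $-\pi_E$ equals that of $\pi_E$, every $k$-form of $E$ has endomorphism ring isomorphic to $R_E$. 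Thus at most one of $O_{-p},O_{-4p}$ can occur, and the lemma follows. The crucial observation is that $-1$ fails to be a square in $\autt_{\bar k}(E\otimes_k\bar k)$ exactly when $j_E\neq 1728$: in the cyclic groups of order $2$ and $6$ the element $-1$ is not a square, while in the cyclic group of order $4$ it is. This is why the argument treats $j_E=0$ on the same footing as generic $j$, with no separate case analysis.

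Your quaternion computation is correct as far as it goes, and the reduction to $t\in\{-p,0,p\}$ with the conclusion $j_E\in\{0,1728\}$ is clean. But the residual case $j_E=0$ is genuinely open in your argument whenever $p\equiv 11\pmod{12}$ (so that $j=0$ is supersingular and distinct from $1728$), and closing it via local optimal-embedding counts at $2$ would require nontrivial extra work. The paper's twist argument is both shorter and uniform, avoiding this bifurcation entirely; if you want to salvage your approach, the most economical patch is probably to invoke the twist argument just for $j_E=0$, or to show directly that $P_{-p}(0)\not\equiv 0\pmod p$ for such $p$.
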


\begin{proof} Choose a square root $\sqrt{-p}$ of $-p$ inside the imaginary quadratic field $\mathcal{O}_{-p}\otimes\qu$,
and identify $\z[\pi_E]$ with an order of $\mathcal{O}_{-p}\otimes\qu$ via the map sending $\pi_E$ to $\sqrt {-p}^{2m+1}$.
By the maximality of $\enn_k(E)$ at $p$ already mentioned, the inclusion $\z[\pi_E]\subset \enn_k(E)$ extends to
an embedding $\tau:\mathcal{O}_{-4p}\hookrightarrow \enn_k(E)$, sending $\sqrt{-p}^{2m+1}$ to $\pi_E$. If $\tau$ further extends to an
embedding of the maximal order $\mathcal{O}_{-p}$, then $b_E=2p^m$ and $E$ has CM by $\mathcal{O}_{-p}$, otherwise $b_E=p^m$ and
$E$ has CM by $\mathcal{O}_{-4p}$.

To prove the lemma, we need to show that if $j_E\neq 1728$ then it is not possible to find
two inclusions
\[
\iota_1:\mathcal{O}_{-p}\lhook\joinrel\longrightarrow\enn_{\bar k}(E\times_k\bar k)\;\;\text{ and }\;\;
\iota_2:\mathcal{O}_{-4p}\lhook\joinrel\longrightarrow\enn_{\bar k}(E\times_k\bar k)
\]
which are both maximal, in the sense of the definition
of complex multiplication given at the beginning of the section. Since $1728$ is the only supersingular invariant when $p=3$, we may and
will continue the proof assuming $p>3$.

We will argue in two steps: first we show that the existence of a maximal embedding
\[
\iota:\mathcal{O}\lhook\joinrel\longrightarrow\enn_{\bar k}(E\times_k\bar k),
\]
where $\mathcal{O}=\mathcal{O}_{-p}$ or $\mathcal{O}_{-4p}$, ensures the existence of a $k$-form $E_\theta$ of $E$ which is $k$-isogenous to $E$ and
for which $\enn_k(E_\theta)\simeq \mathcal{O}$. Next, using that $j_E\neq 1728$, we show that the ring
of $k$-endomorphisms of any $k$-form of $E$ is isomorphic to $\enn_k(E)$.

Before carrying out this plan, we make a digression on the study $k$-forms of $E$ (see \cite{Si} for more details).
The absolute Galois group $G_k$ acts in a natural way on the left of the group $\autt_{\bar k}(E\times_k\bar k)$.
A $1$-cocycle $\theta$ of this action defines an elliptic
curve $E_\theta$ over $k$ and an isomorphism
\[
\varphi_\theta:E_\theta\times_k\bar k\stackrel{\sim}{\longrightarrow} E\times_k\bar k
\]
such that,
if $\sigma\in G_k$ is the arithmetic Frobenius of $k$, the isogeny
$\pi_{E_\theta}$ corresponds to $\theta(\sigma)\pi_E$ under the identification
\[
\enn_{\bar k}(E_\theta\times_k\bar k)\simeq\enn_{\bar k}(E\times_k\bar k)
\]
induced by $\varphi_\theta$. In particular, extension of scalars identifies $\enn_k(E_\theta)$
with the subring of $\enn_{\bar k}(E\times_k\bar k)$ given by the centralizer of $\theta(\sigma)\pi_E$. This construction
induces a bijection
\[
H^1(G_k,\autt_{\bar k}(E\times_k\bar k))\stackrel{\sim}{\longrightarrow}\{k\text{-forms of }E\}_{/\sim}.
\]

Since $\pi_E^2=-p^{2m+1}$, the curve $E$ acquires all of its geometric endomorphisms over the degree $2$ extension of
$k$ inside $\bar k$, therefore the Galois action of $G_k$ on $\enn_{\bar k}(E\times_k\bar k)$, which is non-trivial, becomes
trivial when restricted to its index $2$ subgroup. The group $\autt_{\bar k}(E\times_k\bar k)$ is cyclic of order $2$, $4$, or $6$,
because $p>3$. It follows that $\sigma\in G_k$ acts on $\autt_{\bar k}(E\times_k\bar k)$ by inversion, and it is easy to see that
evaluation of cocycles at $\sigma$ induces an isomorphism
\[
H^1(G_k,\autt_{\bar k}(E\times_k\bar k))\stackrel{\sim}{\longrightarrow}\autt_{\bar k}(E\times_k\bar k)/\autt_{\bar k}(E\times_k\bar k)^2,
\]
in particular $H^1(G_k,\autt_{\bar k}(E\times_k\bar k))$ has order two.

Let now $\mathcal{O}$ be either $\mathcal{O}_{-p}$ or $\mathcal{O}_{-4p}$ and let $\iota:\mathcal{O}\hookrightarrow\enn_{\bar k}(E\times_k\bar k)$ be a maximal
embedding. The unique ideal $I_p$ of $\enn_{\bar k}(E\times_k\bar k)$ of reduced norm $p$ is principal and generated
by $\iota(\sqrt{-p})$. Since the reduced norm of $\pi_E$ is $p^{2m+1}$, there exists a unit $u\in\autt_{\bar k}(E\times_k\bar k)$
such that
\[
\iota(\sqrt{-p}^{2m+1})=u\pi_E.
\]
If $\theta$ is the $1$-cocycle of $G_k$ valued in $\autt_{\bar k}(E\times_k\bar k)$ satisfying $\theta(\sigma)=u$, the construction described above leads to a $k$-form $E_\theta$ of $E$
and an isomorphism $\varphi_\theta:E_\theta\times_k\bar k\stackrel{\sim}{\longrightarrow} E\times_k\bar k$ such that $\pi_{E_\theta}$ corresponds to $u\pi_E$ and the ring $\enn_k(E_\theta)$
corresponds to $\iota(\mathcal{O})$, the centralizer of $u\pi_E$ in $\enn_{\bar k}(E\times_k\bar k)$. Therefore
\[
\enn_k(E_\theta)\simeq \mathcal{O},
\]
and the first step of our program is complete.

To prove the second step, observe that the assumption $j_E\neq 1728$ is equivalent to require that
$-1\in\autt_{\bar k}(E\times_k\bar k)$ not be a square. Therefore the $1$-cocycle sending $\sigma$ to $-1$ on the
one hand describes the only non-trivial $k$-form of $E$, on
the other hand it defines an elliptic curve over $k$ whose ring of $k$-endomorphisms is isomorphic to $\enn_k(E)$,
since the centralizer of $-\pi_E$ is the same as that of
$\pi_E$. We conclude that if $j_E\neq 1728$ the two non-isomorphic $k$-forms of $E$ have
isomorphic $k$-endomorphism rings. This completes the proof of the lemma.
\end{proof}

To complete the proof of Proposition~\ref{index} we are only left with showing that if $E$ is special then $\beta_E=2p^m$. Equivalently,
we need to show that any special $E$ has CM by $\mathcal{O}_{-p}$. This was observed by Elkies in \cite{El}, where he considered
the elliptic curve over $\f_p$ given by $y^2=x^3-x$.
\end{proof}

We make the final remark that if $E$ is special, then the value of $b_E$ cannot be determined from the sole knowledge of
$f_E(x)$ and $j_E$. One can show that $b_E=2p^m$ if the two-torsion $E[2]$ of $E$ is all defined over $k$,
and $b_E=p^m$ otherwise, where $[k:\f_p]=2m+1$.

\section{Proof of Theorem \ref{MT}}\label{proof}

After explaining in \S~\ref{section:index} how to find the index $b_E$ from $f_E(x)$ and $j_E$, we are now ready
to prove the main theorem of the paper.

We begin by pointing out the basic fact that the action of $\fr_k$ on $T_\ell(E)$ is the {\it same} as that induced by $\pi_E$ via
functoriality of $T_\ell$. Next, we observe that the theorem is trivial if $E$ is supersingular with all of its geometric endomorphisms defined
over $k$. In fact in this case
$\Delta_E=0$ and $\pi_E$ is equal to multiplication by the integer $a_E/2$.

We continue assuming $\Delta_E<0$, i.e., $b_E$ finite, and prove a lemma on the natural action
of $\enn_k(E)\otimes\z_\ell$ on $T_\ell(E)$.

\begin{lem}\label{free} Assume that the index $b_E$ is finite. For any prime $\ell\neq p$, the Tate module
$T_\ell(E)$ of $E$ is free of rank one over $\enn_k(E)\otimes\z_\ell$.
\end{lem}

\begin{proof}
Functoriality of $T_\ell$ induces an isomorphism
\begin{equation}\label{tateiso}r_\ell:\enn_k(E)\otimes\z_\ell\stackrel{\sim}{\longrightarrow}\enn_{\z_\ell[G_k]}(T_\ell(E)).
\end{equation}
This follows, for example, from a celebrated theorem of Tate on abelian varieties over finite fields (see \cite{Ta2}).\footnote{The
injectivity of $r_\ell$ follows from a general fact on abelian varieties (see \cite{M}, \S~19, Theorem~3). Its surjectivity is easy and
can be proved directly without invoking Tate's theorem.}

The ring $\enn_k(E)\otimes\z_\ell$ is a free $\z_\ell$-module of rank two, and admits a $\z_\ell$-basis of the form
$(1, \pi')$, for some $\pi'\in \enn_k(E)\otimes\z_\ell$. Since $r_\ell$ is an isomorphism, $(1, r_\ell(\pi'))$ is a $\z_\ell$-basis of
$\enn_{\z_\ell[G_k]}(T_\ell(A))$. We deduce that for any $s\in \z_\ell$ the element
\[
r_\ell(\pi')-s\cdot 1
\]
is not divisible by $\ell$ in $\enn_{\z_\ell[G_k]}(T_\ell(E))$. Equivalently, the reduction modulo $\ell$ of $r_\ell(\pi')$ is
an endomorphism of $T_\ell(E)/\ell T_\ell(E)$ which is not given by multiplication by a scalar in $\z/\ell\z$.
This is to say that there exists $t\in T_\ell(E)-\ell T_\ell(E)$ such that
\[
r_\ell(\pi')\cdot t\;\not\in\;\z_\ell\cdot t+\ell T_\ell(E).
\]
By Nakayama's Lemma we have that the pair $(t,r_\ell(\pi')\cdot t)$ is a $\z_\ell$-basis
$T_\ell(E)$, since the mod $\ell$ reductions of its components generate $T_\ell(E)/\ell T_\ell(E)$. It follows that
\begin{equation}\label{isotat}\enn_k(E)\otimes\z_\ell\ni a\longmapsto r_\ell(a)\cdot t\in T_\ell(E)
\end{equation}
is an isomorphism of $\enn_k(E)\otimes\z_\ell$-modules, which shows that $T_\ell(E)$ is free of rank one over $\enn_k(E)\otimes\z_\ell$.
\end{proof}

Let $\sqrt{\Delta_E}\in\enn_k(E)$ be the square root of $\Delta_E$ given by $2\pi_E-a_E$. It is elementary to check that
\[
\pi_E'=(\Delta_E+b_E\sqrt{\Delta_E})/2b_E^2
\]
belongs to $\enn_k(E)$ and the pair $\mathcal{B}_\z=(1, \pi_E')$ is a $\z$-basis of $\enn_k(E)$.
Furthermore, multiplication by $\pi_E$ on $\enn_k(E)$ is given in the coordinates induced by $\mathcal{B}_\z$ by the matrix

\[
\sigma_E'=\left(\begin{matrix}
\dfrac{a_Eb_E-\Delta_E}{2b_E}&\dfrac{\Delta_E(b_E^2-\Delta_E)}{4b_E^3}\\
b_E&\dfrac{a_Eb_E+\Delta_E}{2b_E}\\
\end{matrix}\right).
\]

The same matrix a {\it fortiori} describes multiplication by $\pi_E\otimes 1$ on $\enn_k(E)\otimes\z_\ell$, with respect to the $\z_\ell$-basis
deduced from $\mathcal{B}_\z$. Since, by Lemma~\ref{free}, $T_\ell(E)$ is free of rank one over $\enn_k(E)\otimes\z_\ell$, we conclude that
$\sigma_E'$ describes the multiplication action of $\pi_E$ on $T_\ell(E)$ as well, in the $\z_\ell$-coordinates of a suitable basis.
This completes the proof of Theorem~\ref{MT} when $E$ is not special, for $\sigma_E'=\sigma_E$ by Proposition~\ref{index},
and also gives an alternative proof of the main result of \cite{DT}.

If $E$ is special then $b_E=p^m$ or $2p^m$ where $[k:\f_p]=2m+1$ (see Proposition~\ref{index}), and the matrix $\sigma_E'$ is given by
\begin{equation}\label{m1orm2}
m_1=\left(\begin{matrix}
2p^{m+1}&-p^{m+1}(4p+1)\\
p^m&-2p^{m+1}\\
\end{matrix}\right)\text{ or }
m_2=\left(\begin{matrix}
p^{m+1}&-p^{m+1}(p+1)/2\\
2p^m&-p^{m+1}\\\end{matrix}\right),
\end{equation}
respectively. Moreover, $\beta_E=2p^m$ and hence $\sigma_E=m_2$.
The matrix equality
\[
\left(\begin{matrix}
1&p\\
0&2\\
\end{matrix}\right)
m_1{\left(\begin{matrix}
1&p\\
0&2\\
\end{matrix}\right)}^{-1}=m_2
\]
shows that $m_1$ and $m_2$ define the same $\gl_2(\z_\ell)$-conjugacy class for any odd prime $\ell$. This suffices to complete
the proof of Theorem~\ref{MT}.

\section{A Global Application}\label{global}

Let $K$ be a number field, $\bar K$ an algebraic closure of it, and $G_K$ the absolute Galois group ${\rm Gal}(\bar K/K)$
of $K$. If $\p$ is a finite prime of $K$, denote by $k_\p$ its residue field, by $p$ its residual characteristic, by $K_\p$ the corresponding completion of $K$,
and by $G_{K_\p}$ the decomposition group of $G_K$ at $\p$ with respect to the choice of a prime $\bar\p$ of $\bar K$ lying above $\p$.
Denote moreover by $\bar k_\p$ the algebraic closure of $k_\p$ given by the residue field of $\bar\p$, and by $G_{k_\p}$ the Galois group
${\rm Gal}(\bar k_\p/k_\p)$.

Let $\mathcal{E}$ be an elliptic curve over $K$ with $j$-invariant $j_\mathcal{E}$. If $\p$ is a finite prime of $K$ at which $\mathcal{E}$ has good reduction $\mathcal{E}_\p$,
denote by $a_\p$ the error term $|k_\p|+1-|\mathcal{E}_\p(k_\p)|$, and by $\Delta_\p$ the discriminant $a_\p^2-4|k_\p|$. If $N$ is an integer $\ge 1$,
the $N$-th torsion subgroup $\mathcal{E}[N]$ is a finite group scheme over $K$ of rank $N^2$ whose group of $L$-valued points, for any $K$-algebra
$L$, is given by $\mathcal{E}[N](L)={\rm Hom}(\z/N\z, \mathcal{E}(L))$. We will identify $\mathcal{E}[N]$ with $\mathcal{E}[N](\bar K)$, an abelian group isomorphic to ${(\z/N\z)}^2$
equipped with a continuous action of $G_K$. By Galois Theory, the kernel of the representation
$$\rho_{\mathcal{E}[N]}:G_K\longrightarrow\autt(\mathcal{E}[N])\simeq\gl_2(\z/N\z)$$
defines a Galois extension $K(\mathcal{E}[N])/K$, known as the $N$-th torsion field of $\mathcal{E}$, with Galois group isomorphic to
${\rm Im}(\rho_{\mathcal{E}[N]})$. As is well known, $\rho_{\mathcal{E}[N]}$ is unramified
at every finite prime $\p$ of $K$ not dividing $N$ and at which $\mathcal{E}$ has good reduction. More precisely, for any $N$ not divisible by $p$,
the reduction map induces an identification
\begin{equation}\label{iden}\mathcal{E}[N](\bar K)=\mathcal{E}_\p[N](\bar k_\p)
\end{equation}
which is equivariant with respect to the Galois actions of $G_{K_\p}$ and $G_{k_\p}$ (see \cite{ST}, Lemma~2).
Theorem \ref{MT} can then be applied to describe the conjugacy class of
\[
\rho_{\mathcal{E}[N]}({\rm Frob}_\p)
\]
in $\gl_2(\z/N\z)$ in terms
of the trace $a_\p$, the size of $k_\p$, and the $j$-invariant $j_{\mathcal{E}_\p}$, provided that $N$ satisfies the usual constraint
of being odd if $\p$ is one of the finitely many primes of $K$ such that $\mathcal{E}_\p$ is special. Notice that our local method will not
say anything about the ${\rm Im}(\rho_{\mathcal{E}[N]})$-conjugacy class of $\rho_{\mathcal{E}[N]}({\rm Frob}_\p)$. However, if $E$ has no complex multiplication then
a celebrated result of Serre says that ${\rm Im}(\rho_{\mathcal{E}[N]})=\autt(\mathcal{E}[N])$ for all $N$ relatively prime to some positive constant $N_E$ (see \cite{S}).

In Theorem~\ref{RL}, we only made explicit the necessary and sufficient condition for $\rho_{\mathcal{E}[N]}(\fr_\p)$ to act as the identity on
$\mathcal{E}[N]$ or, equivalently, for $\p$ to split completely in the $N$-th torsion field of $\mathcal{E}$.

\begin{proof}[proof of Theorem~\ref{RL}] The theorem is trivial if $\Delta_\p=0$, therefore we continue assuming $\Delta_p<0$ and first treat the case where
$\mathcal{E}_\p$ is not special. The integral matrix associated to $\mathcal{E}_\p$ by Theorem~\ref{MT} is
\[
\sigma_\p=\left(\begin{matrix}
\dfrac{a_\p\beta_\p-\Delta_\p}{2\beta_\p}&\dfrac{\Delta_\p(\beta_\p^2-\Delta_\p)}{4\beta_\p^3}\\
\beta_\p&\dfrac{a_\p\beta_\p+\Delta_\p}{2\beta_\p}\\
\end{matrix}\right),
\]
where $\beta_\p$ is equal to
\[
\sup_{h>0}\{h:h^2\mid\Delta_\p\text{ and }\;\mathcal{P}_{\Delta_\p/h^2}(j_\mathcal{E})\equiv 0\pmod\p\},
\]
which is an integer since $\Delta_\p\neq 0$. Moreover, the ratio $\Delta_p/\beta_p^2$ is necessarily a negative
discriminant, since $\mathcal{P}_D(x)$ is defined as the constant polynomial $1$ for all negative integers $D\equiv 2\text{ or }3\pmod 4$.

By Theorem~\ref{MT}, if $N$ is an integer not divisible by $p$, the prime $\p$ splits completely in $K(\mathcal{E}[N])/K$ if and only if
\begin{equation}\label{congr}
\sigma_\p\equiv\text{Id}_2\pmod N,
\end{equation}
where $\text{Id}_2$ is the two-by-two identity matrix. Our task is verifying that \eqref{congr} is equivalent
to have conditions \eqref{th3.1} and \eqref{th3.2} of Theorem~\ref{RL} both satisfied.

We begin by observing that $\sigma_\p$ is a scalar matrix if and only if condition \eqref{th3.1} holds, which is to say
\begin{equation}\label{iff1}
\sigma_\p\text{ is a scalar matrix }\iff N\mid b_\p.
\end{equation}
The ``only if'' direction of \eqref{iff1} is clear, and to see the ``if'' part it is enough to observe that the upper right entry of $\sigma_\p$ is divisible by
$\beta_\p$, since the ratio $\Delta_\p/\beta_\p^2$ is a negative discriminant, and that the difference of the diagonal entries
\[
\dfrac{a_\p\beta_p-\Delta_\p}{2\beta_p}-\dfrac{a_\p\beta_\p+\Delta_\p}{2\beta_\p}=-\dfrac{\Delta_p}{\beta_p}
\]
is also divisible by $\beta_p$.

To complete the proof in the non-special case it suffices to observe the elementary fact that if $N$ divides $\beta_p$ then
\[
\dfrac{a_\p\beta_\p+\Delta_\p}{2\beta_\p}\equiv 1\pmod N\iff a_\p\equiv 2 +\dfrac{\Delta_\p}{\beta_\p}\pmod {N^*},
\]
where $N^*= N$ if $N$ is odd and $N^*=2N$ otherwise.

Assume now that $\mathcal{E}_\p$ is special, and let $N$ be an integer $>2$ and not divisible by $p$. The Frobenius $\rho_{\mathcal{E}[N]}(\fr_\p)$
is described by one of the two matrices in \eqref{m1orm2}, where $m$ is such that $2m+1=[k_\p:\f_p]$. However, both these matrices are not
congruent to the identity modulo $N$, hence $\p$ does not split in $K(\mathcal{E}[N])/K$. On the other hand, condition \eqref{th3.1} of the
theorem is not satisfied, since $N^2$ does not divide $\Delta_\p=-4p^{2m+1}$, and the proof is complete.
\end{proof}

Finally, notice that in the special case where $N=\ell$ is prime, Theorem \ref{RL} gives a criterion for deciding whether or not $\fr_\p$ acts on
$\mathcal{E}[\ell]$ in a semi-simple fashion in the critical case when $\ell|\Delta_\p$, i.e., when such an action has only one eigenvalue.
This problem has been emphasized in \cite{Sh}.

\section*{Acknowledgments}

It is a pleasure to thank Gebhard B\"ockle for all the time spent together discussing this work and related topics. I thank L\"oic Merel for
several discussions we had, which motivated my interest in the questions considered in this paper. I thank Andrew Snowden and
Jared Weinstein for their comments on an earlier version of the manuscript that helped me improve the exposition.
The first version of this work was completed in June 2013 while I was visiting UCLA. I want to thank Chandrashekhar Khare for his
kind invitation there.

\end{document}